\documentclass[11pt,a4paper]{amsart}
\pdfoutput=1

\usepackage[utf8]{inputenc}

\usepackage{hyperref,graphicx,amssymb}

\usepackage{amsfonts}
\usepackage{amssymb}
\usepackage{amsmath}
\usepackage{amsthm}
\usepackage{mathtools}
\usepackage{tikz}

\newtheorem*{theorem*}{Theorem}
\newtheorem*{lemma*}{Lemma}

\begin{document}

\title[$PSp_6(2)$ as Galois group over $\mathbb{Q}(t)$ of degree 36]{A one-parameter family of degree 36 polynomials with $PSp_6(2)$ as Galois group over $\mathbb{Q}(t)$  }

\author{Dominik Barth}
\author{Andreas Wenz}

\address{Institute of Mathematics\\ University of Würzburg \\ Emil-Fischer-Straße 30 \\ 97074 Würzburg, Germany}
\email{dominik.barth@mathematik.uni-wuerzburg.de}
\email{andreas.wenz@mathematik.uni-wuerzburg.de}

\begin{abstract}
We present a one-parameter family of degree $36$ polynomials with the symplectic $2$-transitive group $PSp_6(2)$ as Galois group over $\mathbb{Q}(t)$.
\end{abstract}
\maketitle

In the following we will refer to well known facts about covers of the Riemann sphere $\mathbb{P}^1\mathbb{C}$ and Hurwitz spaces appearing in e.g. \cite{FV}, \cite{MM} and \cite{Voe}. 

Let $\vec{\mathcal{C}}=(\mathcal{C}_1,\mathcal{C}_2,\mathcal{C}_2,\mathcal{C}_3)$ be the class vector of the group $PSp_6(2)\xhookrightarrow{} S_{36}$, where the conjugacy classes $\{\mathcal{C}_i\}_{i=1,2,3}$ are unique of type $(3^{12})$, $(1^{12}.2^{12})$, and $(1^6.2.4^7)$, and $\mathcal{H}$ the associated $(2,3)$-symmetrized Hurwitz curve\footnote{$\mathcal{H}$ can be interpreted as the family of four-branch-point covers of $\mathbb{P}^1\mathbb{C}$ ramified over $0$, $1\pm \sqrt{\lambda}$, $\infty$ with ramification data $\vec{\mathcal{C}}$.}. The corresponding straight inner Nielsen class is of length $2$ and forms a single orbit under the braid group action. Therefore, the branch point reference map $\mathcal{H} \to \mathbb{P}^1\mathbb{C}$ is of degree 2, and ramified over two rational points.
Combining this observation with the rationality of all classes in $\vec{\mathcal{C}}$ yields that $\mathcal{H}$ is a rational genus-$0$ curve over $\mathbb{Q}$.
This implies that $PSp_6(2)$ occurs as a Galois group over $\mathbb{Q}(a,t)$ where the ramification with respect to $t$ is described by $\vec{\mathcal{C}}$.

In order to obtain an explicit polynomial with $PSp_6(2)$ as Galois group we follow the method described in \cite{BWJ} by computing a four-branch point cover $f$ corresponding to $\vec{\mathcal{C}}$. Assume $f$ has the ramification locus consisting of $0,1,-1,\infty$, then $f^2$ turns out to be a Belyi map ramified over $0$, $1$, $\infty$, and its (transitive) monodromy group is contained in the wreath product $PSp_6(2) \wr C_2 \xhookrightarrow{}S_{72}$. The corresponding ramification has to be of type $(6^{12})$, $(1^{24}.2^{24})$ and  $(2^6. 4. 8^7 )$. Now, $PSp_6(2) \wr C_2$ contains exactly one triple $(x,y,z)$ (up to simultaneous conjugation) which satisfies $xyz =1$ and the above conditions describing the monodromy of $f^2$. It is given by
\begin{align*}
x=\;& 
(1, 37, 16, 70, 23, 59)(2, 51, 13, 43, 7, 49)(3, 39, 32, 71, 28, 46) \\
&(4, 66, 26, 72, 34, 52)(5, 42, 31, 67, 10, 64)(6, 41, 22, 69, 29, 65)\\
&(8, 56, 12, 48, 21, 54)(9, 45, 30, 40, 14, 50)(11, 47, 33, 58, 18, 57)\\
&(15, 38, 19, 60, 24, 55)(17, 53, 20, 44, 35, 68)(25, 61, 27, 63, 36, 62),
\end{align*}
and
\begin{align*}
z=\;&  (1, 71, 35, 40, 4, 52, 16, 37)(2, 46, 10, 67, 31, 65, 29, 38)\\
&(3, 49, 13, 56, 20, 68, 32, 39)(5, 64, 28, 59, 23, 72, 36, 41)(6, 42)\\
&(7, 43)(8, 54, 18, 66, 30, 50, 14, 44)(9, 45)(11, 57, 21, 47)\\
&(12, 51, 15, 55, 19, 69, 33, 48)(17, 53)(22, 63, 27, 61, 25, 62, 26, 58)\\&(24, 60)(34, 70).
\end{align*}
Applying the method explained in \cite{BW_J1} and \cite{BWJ} we  compute the desired Belyi map:
$$
f^2(X) = -\,2^{-4} \cdot 3^{-8} \cdot \frac{p(X)}{q(X)} \in \mathbb{Q}(X)
$$
where
\begin{align*}
p(X) = \;& \left( X^{12} + 8X^{11} - 10X^{10} - 40X^9 - 69X^8 - 96X^7 - 84X^6 \right.
\\
&\left. - 48X^5 - 21X^4 - 40X^3 - 26X^2 - 8X + 1 \right)^6,
\\
q(X) = \;& \left(X^3 + 3X + 2\right)^8 \left(X^4 + \frac{4}{3}X^3 - \frac{1}{3}\right)^8 \left(X^6 + \frac{3}{2}X^4 + \frac{1}{2}\right)^2 .
\end{align*}
This, obviously, gives us $f\in \mathbb{C}(X)$ ramified over $0$, $1$, $-1$, and $\infty$. Finally, we follow the approach in \cite{BWJ} to find a one-parameter family of polynomials with 
Galois group $PSp_6(2)$ over $\mathbb{Q}(t)$ corresponding to $\mathcal{H}$:
\begin{theorem*} 
Let $f(a,t,X) = p(a,X)-tq(a,X) \in \mathbb{Q}(a,t)[X]$
where 
\begin{align*}
p(a,X) =\; 
& \left( X^{12} + X^{11} + \left(144a + \frac{1}{8} \right)X^{10} + 40aX^9 + \left(-1728a^2 + \frac{21}{4}a\right)X^8 \right. \\
&+ \left(-576a^2 + \frac{3}{8}a\right)X^7 - 84a^2X^6 - 6a^2X^5 + \left(144a^3 - \frac{3}{64}a^2\right)X^4  \\
& \left. + \;40a^3X^3 + \frac{13}{4}a^3X^2 + \frac{1}{8}a^3X + a^4 \right)^3 ,
\end{align*}
and
\begin{align*}
q(a,X) =\; & \left( X^6 - 12aX^4 + \frac{1}{2}a^2 \right) \cdot \left(X^3 - 24aX - 2a \right)^4\\
& \cdot \left(X^4 + \frac{1}{6}X^3 + \frac{1}{24}a \right)^4.
\end{align*}
Then the Galois group of $f$ over $\mathbb{Q}(a,t)$ is isomorphic to $PSp_6(2)\xhookrightarrow{}S_{36}$, and the branch cycle structure of $f$ with respect to $t$ is given by $(3^{12}$, $1^{12}.2^{12}$, $1^{12}.2^{12}$, $1^6.2.4^7)$.
\end{theorem*}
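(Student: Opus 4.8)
The plan is to take the abstract realization of $PSp_6(2)$ over $\mathbb{Q}(a,t)$ already furnished by the Hurwitz-space analysis above and to certify that the explicit polynomial $f(a,t,X) = p(a,X) - t\,q(a,X)$ is a model of exactly this family. A preliminary group-theoretic observation streamlines the descent: the degree-$36$ action of $PSp_6(2)$ is primitive and not regular, so its centralizer in $S_{36}$ is trivial, and since $\mathrm{Out}(PSp_6(2)) = 1$ its normalizer in $S_{36}$ is $PSp_6(2)$ itself. Consequently, once the geometric monodromy group over $\mathbb{C}(a)(t)$ is shown to be $PSp_6(2)$, the arithmetic monodromy group over $\mathbb{Q}(a,t)$ — which contains the geometric group as a normal subgroup and therefore lies in its normalizer — is forced to coincide with it. The two substantive tasks are thus (A) reading off the branch cycle structure with respect to $t$ and (B) identifying the geometric monodromy group.

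For (A) I would extract the ramification directly from the factorizations of $p$ and $q$, having first checked that $p(a,X)$ and $q(a,X)$ are coprime over $\mathbb{Q}(a)$ so that no poles and zeros cancel. The fibre over $t=0$ is cut out by $p(a,X)$, a perfect cube of a (generically separable) degree-$12$ polynomial, giving twelve points of ramification index $3$, i.e.\ cycle type $3^{12} = \mathcal{C}_1$. The fibre over $t=\infty$ consists of the poles of $p/q$: the simple factor $X^6 - 12aX^4 + \tfrac12 a^2$ contributes six unramified points, the factors $(X^3 - 24aX - 2a)^4$ and $(X^4 + \tfrac16 X^3 + \tfrac{1}{24}a)^4$ contribute seven points of index $4$, and the degree gap $\deg p - \deg q = 2$ forces a single point of index $2$ at $X=\infty$, yielding $1^6.2.4^7 = \mathcal{C}_3$. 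For the remaining branching I would compute $\mathrm{disc}_X\bigl(p(a,X) - t\,q(a,X)\bigr)$ as a polynomial in $t$ over $\mathbb{Q}(a)$ and verify that, beyond the contributions already accounted for at $t=0,\infty$, it vanishes to the expected order at exactly two further values $t_1(a), t_2(a)$, each producing twelve transpositions, i.e.\ cycle type $1^{12}.2^{12} = \mathcal{C}_2$. A Riemann–Hurwitz count ($24 + 22 + 12 + 12 = 70 = -2 + 72$) then confirms genus $0$ and the absence of any hidden ramification.

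For (B) the decisive point is that cycle types alone do not pin down a transitive subgroup of $S_{36}$, so I would invoke the rigidity established above: the straight inner Nielsen class of $(PSp_6(2), \vec{\mathcal{C}})$ has length $2$ and forms a single braid orbit, hence all covers with ramification $\vec{\mathcal{C}}$ constitute one connected family on which the geometric monodromy group is constant and equal to $PSp_6(2)$. The explicit tuple $(x,y,z)$ in $PSp_6(2)\wr C_2$ governing the Belyi map $f^2 = (t\mapsto t^2)\circ f$ restricts, on one block of the wreath action, to a generating tuple of $PSp_6(2)$ for $f$ itself, so $f(a,t,X)$ belongs to this family and its geometric monodromy group is $PSp_6(2)$. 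To make the identification independent of the construction I would additionally specialize $(a,t)$ to suitable rational values, reduce modulo several primes, and read off Frobenius cycle types to exhibit $PSp_6(2)$ as a lower bound for the Galois group of the resulting degree-$36$ polynomial over $\mathbb{Q}$; combined with the upper bound from (A) and the rigidity, this gives $G = PSp_6(2)$, and the normalizer computation above then upgrades this to the arithmetic statement over $\mathbb{Q}(a,t)$.

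The main obstacle is step (B): certifying that the geometric monodromy is \emph{exactly} $PSp_6(2)$ rather than a transitive overgroup in $S_{36}$ realizing the same cycle structure. This is precisely what the length-$2$, single-braid-orbit rigidity resolves, since it identifies the cover, up to braiding and simultaneous conjugation, with the one attached to $(x,y,z)$; by contrast, the determination of the branch cycle structure in (A) and the final descent to $\mathbb{Q}(a,t)$ are comparatively routine computations.
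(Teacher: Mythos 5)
Your part (A) is sound and is essentially what the paper does (the paper derives the four branch points and the branch cycle structure from the discriminant of $f$ and the inseparability of the fibres over $t\mapsto 0$, $t\mapsto\infty$ and the two roots $r_1,r_2$), and your normalizer observation ($C_{S_{36}}(PSp_6(2))=1$ and $\mathrm{Out}(PSp_6(2))=1$, hence the arithmetic monodromy group coincides with the geometric one once the latter is identified) is correct. The fatal problem is part (B), which is circular. The braid-orbit computation of length $2$ concerns the \emph{inner} Nielsen class, i.e.\ it counts covers whose monodromy group \emph{is} $PSp_6(2)$ with classes $\vec{\mathcal{C}}$; it says nothing about a cover that merely exhibits the same \emph{cycle types}. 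A genus-$0$ tuple in $A_{36}$ with cycle types $(3^{12},\,1^{12}.2^{12},\,1^{12}.2^{12},\,1^6.2.4^7)$ and product $1$ generating $A_{36}$ is excluded by nothing in your argument, and Riemann--Hurwitz counts cannot see the difference, since they depend only on the types. Likewise, the claim that the tuple $(x,y,z)$ ``governs'' the printed rational function $f^2$ is precisely the assertion that the numerically computed and then rationally recognized coefficients are correct --- this is what has to be proved for the explicit polynomial, not what may be assumed. Your fallback (Frobenius cycle types after specialization) only ever yields \emph{lower} bounds, while the actual difficulty is the \emph{upper} bound: nothing in (A) bounds the group above by $PSp_6(2)$, and every cycle type occurring in $PSp_6(2)$ also occurs in $A_{36}$, so Frobenius data can never rule out $A_{36}$.

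The paper closes exactly this gap with arguments your proposal lacks: it specializes $a\mapsto 1$ (Malle's theorem preserves the Galois group), reduces modulo $37$, and proves $2$-transitivity by checking that $\frac{1}{X-t}\,\bar{f}_1\bigl(\bar{p}_1(t)/\bar{q}_1(t),X\bigr)$ is irreducible over $\mathbb{F}_{37}(t)$, which pins the group inside $\{PSp_6(2),A_{36},S_{36}\}$; it kills $S_{36}$ because the discriminants are squares; it transfers between characteristic $0$ and characteristic $37$ by Beckmann's theorem (using simplicity and regularity); and it kills $A_{36}$ via the Lemma: an explicit irreducible degree-$63$ polynomial $r(t,X)$ becomes reducible under $t\mapsto \bar{p}_1(t)/\bar{q}_1(t)$, forcing a proper subgroup of index dividing $63$, which $A_{36}$ does not possess (its smallest proper subgroup index is $36$, the next $630$). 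Some verification of this kind --- a resolvent/subfield computation certifying the group from the explicit coefficients --- is unavoidable here; rigidity of the Nielsen class cannot substitute for it.
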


The polynomial is also contained in the ancillary \texttt{Magma}-readable \cite{Magma} file.
Another polynomial with $PSp_6(2)$ as Galois group of degree $28$ (associated to a different class vector) that possesses infinitely many totally real specializations was found recently by the two authors and J. König, see \cite{BWJ}.

In order the prove the theorem we require the following observation:
\newpage
\begin{lemma*}
Let $K$ be an arbitrary field, $p(X),q(X)\in K[X]$ be coprime and $G:=\text{Gal}\,(p(X)-tq(X)\mid K(t))$. Further assume there exists an irreducible polynomial $r(X)\in K(t)[X]$ of degree $n$ such that $r(X)\in K(s)[X]$ becomes reducible where $t= \frac{p(s)}{q(s)}$. Then there exists a divisor $d\neq 1$ of $n$ such that $G$ posseses an index $d$ subgroup. 
\end{lemma*}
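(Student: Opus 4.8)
The plan is to convert the hypothesis that $r$ loses irreducibility on passing from $K(t)$ to $K(s)$ into the statement that $p(X)-tq(X)$ itself becomes reducible over a suitable degree-$n$ extension of $K(t)$, and then to read the required subgroup off the Galois correspondence. First I would fix notation: let $\Omega$ be the splitting field of $F(X):=p(X)-tq(X)$ over $K(t)$, so that $G=\mathrm{Gal}(\Omega/K(t))$, and write $N:=\max(\deg p,\deg q)$ for the degree of $F$ in $X$. Since $t=p(s)/q(s)$ gives $F(s)=0$, the field $K(s)=K(t)(s)$ is the stem field of a single root of $F$; coprimality of $p$ and $q$ makes $F$ irreducible over $K(t)$ with $[K(s):K(t)]=N$, so $G$ acts transitively on the $N$ roots of $F$. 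I would then fix a root $\beta$ of $r$ in an algebraic closure and set $E:=K(t)(\beta)$, a degree-$n$ extension of $K(t)$.

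The heart of the argument is a degree count around the diamond with vertices $K(t)$, $K(s)$, $E$ and the compositum $K(s,\beta)=K(s)(\beta)=E(s)$. Because $r$ becomes reducible over $K(s)$, the minimal polynomial of $\beta$ over $K(s)$ is a proper divisor of $r$, so $d':=[K(s,\beta):K(s)]<n$. Multiplicativity of degrees along the towers $K(t)\subseteq K(s)\subseteq K(s,\beta)$ and $K(t)\subseteq E\subseteq K(s,\beta)$ then gives
$$N\cdot d'\;=\;[K(s,\beta):K(t)]\;=\;n\cdot[K(s,\beta):E],$$
whence $[K(s,\beta):E]=Nd'/n<N$. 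Now $[K(s,\beta):E]=[E(s):E]$ is exactly the degree of the irreducible factor of $F$ over $E$ having $s$ as a root, and its being strictly smaller than $N=\deg F$ forces $F=p(X)-tq(X)$ to be reducible over $E$.

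With this in hand the conclusion is standard. As $F$ is irreducible over $K(t)$ but reducible over $E$, the restriction isomorphism $\mathrm{Gal}(\Omega E/E)\cong\mathrm{Gal}(\Omega/\Omega\cap E)$ shows that the subgroup $\mathrm{Gal}(\Omega/\Omega\cap E)\le G$ does not act transitively on the roots of $F$, so $E':=\Omega\cap E$ strictly contains $K(t)$. Setting $d:=[E':K(t)]$, the Galois correspondence produces a subgroup $\mathrm{Gal}(\Omega/E')\le G$ of index $d$; here $d\neq1$ since $E'\neq K(t)$, and $d\mid n$ since $E'$ is intermediate in $E/K(t)$ and $[E:K(t)]=n$. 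This is precisely the claimed index-$d$ subgroup of $G$.

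I expect the main obstacle to be the second paragraph: recognising that the prescribed drop in irreducibility of $r$ over $K(s)$ is, via the diamond degree count, equivalent to reducibility of $p(X)-tq(X)$ over the auxiliary field $K(t)(\beta)$. The remaining steps are just the usual dictionary between factorisation after base change and intransitivity of the associated subgroup. (For arbitrary characteristic one should also confirm separability, so that $\Omega/K(t)$ is Galois; in the characteristic-zero setting relevant here this holds automatically.)
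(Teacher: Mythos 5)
Your proof is correct and takes essentially the same route as the paper: adjoin a root of $r$ to get $E=K(t)(\beta)$, show that $p(X)-tq(X)$ becomes reducible over $E$, deduce that $\Omega\cap E$ properly contains $K(t)$, and read off an index-$d$ subgroup with $d\mid n$, $d\neq 1$, via the Galois correspondence. The only difference is that your diamond degree count is precisely the standard proof of the ``well known result'' the paper merely cites in its footnote, so you have quoted nothing the paper assumes --- a self-contained variant of the same argument (and your closing separability caveat applies equally to the paper's own proof, which also invokes the Galois correspondence for $\Omega/K(t)$).
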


\begin{proof}
Let $L$ denote the splitting field of the irreducible polynomial $p(X)-tq(X)$ over $K(t)$, and $y$ be a root of $r(X)$ in a splitting field over $K(t)$:
\begin{center}
\begin{tikzpicture}[scale = 1.5]

\draw (4.5,1.5) -- (4.5, 3);
\draw(3,0) -- (4.5,1.5);
\draw(3,0) -- (1,2);
\draw[dashed] (3,0) -- (3,1.2);
\draw[dashed] (3,1.2) -- (1,2);
\draw[dashed] (3,1.2) -- (4.5,3);

\node at (3,0.6) [right] {$d$};

\node (A) at (3,0) [fill = white] {$K(t)$};
\node (B) at (1,2) [fill = white] {$K(t,y)$};
\node (C) at (4.5,1.5) [fill = white] {$K(s)$};
\node (D) at (4.5,3) [fill = white] {$L$};
\node (D) at (3,1.2) [fill = white] {$L \cap K(t,y)$};

\node at (2,1) [left] {$r(X)$ of degree $n$\phantom{0}};
\node at (3.75,.75) [right] {\phantom{0}$p(X)-tq(X)$};
\end{tikzpicture} 
\end{center}
Thanks to the assumption that $r(X)$ splits nontrivially over $K(s)$, the polynomial $p(X)-tq(X)$ is reducible over $K(t,y)$\footnote{A well known result in field theory states: Let $K$ be a field, and $a,b$ algebraic over $K$ with minimal polynomials $\mu_a,\mu_b\in K[X]$. Then $\mu_a$ is irreducible over $K(b)$ if and only if $\mu_b$ is irreducible over $K(a)$.}, thus $ K(t) \lvertneqq L\cap K(t,y)$. Via Galois correspondence $G= \text{Gal}(L\mid K(t))$ must contain an index $d$ subgroup where $d\neq 1$ is a divisor of $n$. 
\end{proof}

\begin{proof}[Proof of the theorem]
Let $f_1,p_1,q_1\in \mathbb{Q}(t)[X]$ denote the specializations of $f,p,q$ at the place $a \mapsto 1$, and $\bar{f}_1,\bar{p}_1,\bar{q}_1$ their images in $\mathbb{F}_{37}(t)[X]$ under the canonical projection.

Using $\texttt{Magma}$ the discriminant of $f$ turns out to be 
\begin{align*}
\Delta=\;& 2^{732} \cdot 3^{168} \cdot   \left(a-\frac{1}{512}\right)^{154} \cdot a^{290} \cdot t^{24} \\
&\cdot \left( t^2 + \left(-2592a - \frac{81}{16}\right)t + 1679616a^2 - 6561a + \frac{6561}{1024}\right) ^{12}.
\end{align*}
With this formula we see that $f$ and $f_1$ have exactly four branch points with respect to $t$. Furthermore the branch cycle structure of $f$ can be derived by inspecting the inseparability behaviour of $f$ evaluated at the places $t \mapsto 0$, $t \mapsto \infty$, and $t \mapsto r_i$ for $i=1,2$ where $r_1$ and $r_2$ denote the non-zero roots of $\Delta \in \mathbb{Q}(a)[t]$.

By a theorem of Malle (see \cite{Malle}), the two Galois groups $\text{Gal}(f\mid \mathbb{Q}(a,t))$ and $\text{Gal}(f_1\mid \mathbb{Q}(t))$ coincide. It remains to show that $\text{Gal}(f_1\mid \mathbb{Q}(t))$ is isomorphic to $PSp_6(2)$.

Since $\frac{1}{X-t}\cdot \bar{f}_1\left(\frac{\bar{p}_1(t)}{\bar{q}_1(t)},X\right)\in \mathbb{F}_{37}(t)[X]$ is irreducible the Galois group of $\bar{f}_1$ over $\mathbb{F}_{37}(t)$ must be 2-transitive of permutation degree 36, implying $\text{Gal}(\bar{f}_1\mid \mathbb{F}_{37}(t)) \in \{PSp_6(2),A_{36},S_{36}\}$. Dedekind reduction yields $\text{Gal}(f_1\mid \mathbb{Q}(t)) \in \{PSp_6(2),A_{36}, S_{36}\}$.
Since both discriminants of $f_1$ and $\bar{f}_1$ are squares, we can each exclude the group $S_{36}$. In particular, $\text{Gal}(f_1\mid \mathbb{Q}(t))$ is simple, and the corresponding function field extension must be regular, allowing us to apply a theorem of Beckmann, see \cite[Chapter I, Proposition 10.9]{MM}, to obtain $\text{Gal}(f_1\mid \mathbb{Q}(t)) \cong \text{Gal}(\bar{f_1}\mid \mathbb{F}_{37}(t))$.

Let $r(t,X)\in \mathbb{F}_{37}(t)[X]$ be the irreducible polynomial of degree 63 in the ancillary file, then $r\left( \frac{\bar{p}_1(t)}{\bar{q}_1(t)},X\right)$ becomes reducible over $\mathbb{F}_{37}(t)$\footnote{The polynomial $r$, which is a minimal polynomial of a primitive element of the fixed field of an index 63 subgroup of $PSp(6,2)$, was obtained by using the \texttt{Magma} command \texttt{GaloisSubgroup}.}. The previous lemma guarantees the existence of an index $d\neq 1$ subgroup of $\text{Gal}(\bar{f_1}\mid \mathbb{F}_{37}(t))$ where $d$ is a divisor of $63$. Since $A_{36}$ does not contain such a subgroup we end up with 
$\text{Gal}(\bar{f_1}\mid \mathbb{F}_{37}(t)) = PSp_6(2)$, completing the proof.
\end{proof}

\section*{Acknowledgements}

We would like to thank Peter Müller for some valuable suggestions regarding the proof of the theorem.

\end{document}